\newtheorem{thm}{Theorem}[section]
\newtheorem{lem}[thm]{Lemma}
\begin{document}

\title{Unknotting twisted knots with Gauss diagram forbidden moves}

\author{Shudan Xue \quad Qingying Deng\footnote{Corresponding author. \newline {\em E-mail address:} 201921001184@smail.xtu.edu.cn (S. Xue), qingying@xtu.edu.cn (Q. Deng).} \\
{\footnotesize   \em  School of Mathematics and Computational Science, Xiangtan University, Xiangtan, Hunan 411105,
P. R. China}}

\date{ }

\maketitle

\begin{abstract}
Twisted knot theory, introduced by M.O. Bourgoin, is a generalization of virtual knot theory.
It is well-known that any virtual knot can be deformed into a trivial knot by a finite sequence of generalized Reidemeister moves and two forbidden moves $F1$ and $F2$. Similarly, we show that any twisted knot also can be deformed into a trivial knot or a trivial knot with a bar by a finite sequence of extended Reidemeister moves and three forbidden moves $T4$, $F1$ (or $F2$) and $F3$ (or $F4$) .
\end{abstract}

$\mathbf{Keywords:}$ Twisted link; Gauss code; Gauss diagram; Forbidden moves.

\vskip0.5cm

\section{Introduction}
\setlength{\parindent}{2em}
Virtual knot theory is a generalization of classical knot theory which is introduced by Kauffman \cite{L.H.K}.
As an appropriate device to describe finite type invariants \cite{T.F2}, Gauss diagram was introduced by Polyak and Viro \cite{M.O} in 1994. Kauffman \cite{L.H.K} developed fruitful theory of virtual knots. Decorated Gauss diagrams introduced by Fiedler \cite{T.F1,T.F2} are an efficient tool, and Mortier \cite{A.M} showed that a knot diagram can be fully recovered from its Gauss diagram and characterized the decorated Gauss diagram of closed braids.
Kwun, Nizami, Nazeer, Munir and Kang \cite{Y.A.W} proved that the Gauss diagram remains unchanged if a knot is mirrored, and is mirrored if the knot is reversed.
In 2001, Kanenobu \cite{T.K} and Nelson \cite{S.N} independently proved the same results, that is, any virtual knot can be deformed into a trivial knot by a finite sequence of generalized Reidemeister moves and forbidden moves $F1$ and $F2$. But the major difference lies in the proof method, the former uses virtual braid moves, the latter combines with Gauss diagrams.

In $2008$, M.O. Bourgoin \cite{M.O.B} generalized virtual links to twisted links.
Virtual links are regarded as twisted links. Recently, S. Kamada and N. Kamada discussed when two virtual
links are equivalent as twisted links, and gave a necessary and sufficient
condition for this to be the case in \cite{Kamada2020}.

Our main contribution in this regard is that we show that any twisted knot also can be deformed into a trivial knot or a trivial knot with a bar by a finite sequence of extended Reidemeister moves and three forbidden moves $T4$, $F1$ (or $F2$) and $F3$ (or $F4$).

The rest of this paper is organized as follows.
In section $2$, we introduce twisted knot theory and Gauss diagram.
In section $3$, we show the main Theorem for twisted knots.

\section{Twisted Knot Theory And Gauss Diagram}

\begin{figure}[!htbp]
  \centering
    \subfigure[$ classical \ knot$]{
  \includegraphics[width=0.19\textwidth]{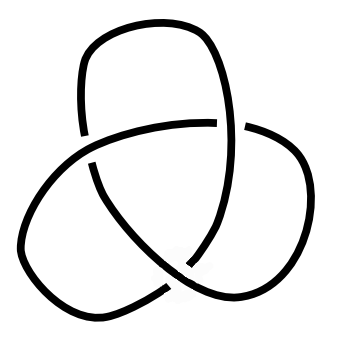}}  \
  \
   \subfigure[$  virtual \ knot$]{
  \includegraphics[width=0.20\textwidth]{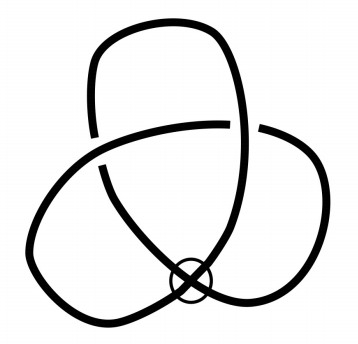}}  \
  \
  \subfigure[$  twisted \ knot$]{
  \includegraphics[width=0.20\textwidth]{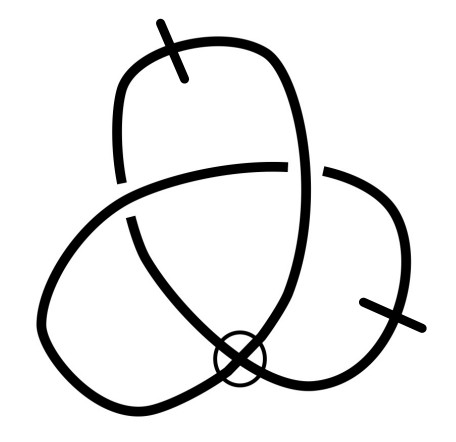}}  \
  \caption{Examples}\label{EX}
\end{figure}

Virtual knot theory is an extension of classical diagrammatic knot theory (see Figure \ref{EX}(a)).
A \emph{virtual link diagram} is a link diagram which may have virtual crossings, which are encircled crossings without over-under information (see Figure \ref{EX}(b)). A \emph{virtual link} is an equivalence class of virtual link diagrams by \emph{(classical) Reidemeister moves} $R1$, $R2$, $R3$ and \emph{virtual Reidemeister moves} $V1$, $V2$, $V3$, $V4$ in Figure \ref{R}. All of these are called \emph{generalized Reidemeister moves}.

 \begin{figure}[!htbp]
  \centering
  \includegraphics[width=1\textwidth]{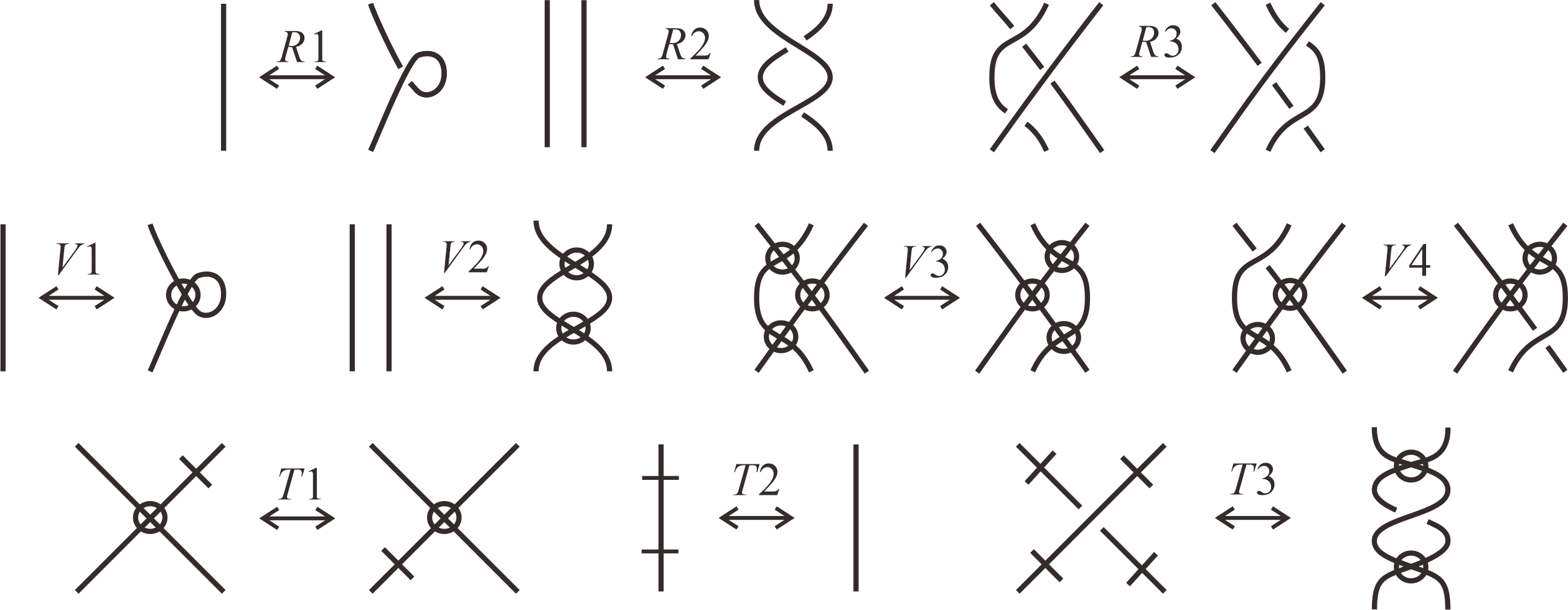}\\
  \caption{Extended Reidemeister moves.}\label{R}
\end{figure}

In $2008$, M.O. Bourgoin \cite{M.O.B} generalized virtual links to twisted links.
A \emph{twisted link diagram} is a virtual link diagram which may have some bars (see Figure \ref{EX}(c)).
A \emph{twisted link} is an equivalence class of twisted link diagrams by \emph{(classical) Reidemeister moves} $R1$, $R2$, $R3$, \emph{virtual Reidemeister moves} $V1$, $V2$, $V3$, $V4$ and \emph{twisted Reidemeister moves} $T1$, $T2$, $T3$ in Figure \ref{R}. All of these are called \emph{extended Reidemeister moves}. The reader may consult \cite{M.O.B} for more details about twisted knot theory.

\begin{figure}[!htbp]
  \centering
  \includegraphics[width=0.30\textwidth]{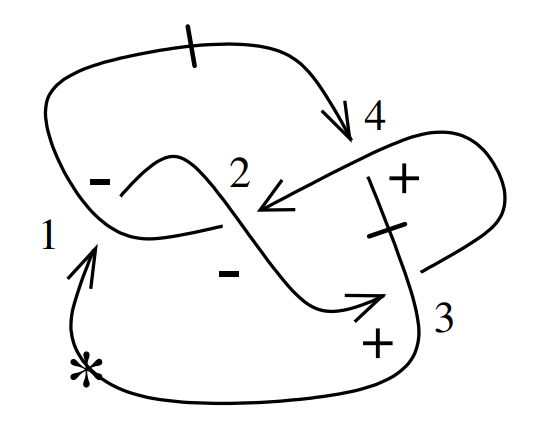}\\
  \caption{Gauss code : U1-O2-U3+O4+U2-O1-bU4+bO3+}\label{GC}
\end{figure}

A \emph{Gauss diagram} is a diagram that is a diagrammatic representation of the Gauss code of a knot. A central question about Gauss diagram is to determine a Gauss code for a given oriented knot diagram. For our purpose, let $K$ be a given twisted oriented knot diagram, then its Gauss code is obtained as follows. First, we associate each classical crossing with a digital labeling (such as 1,2, $\cdots$) and also indicate the crossing type ($+$ or $-$ for positive or negative crossing, respectively). Addition care needs to be taken when there is a bar on an arc between classical crossing points. If a bar on an arc between classical crossing points is met, then you need record the labelling $b$. Then we begin walking along the diagram by choosing a basepoint on the knot diagram, and record the labeling of the crossings and bars \cite{Y.A.W}. For example, if you go under crossing 1 whose sign is $+$ then you will record it as $U1+$, then cross a bar and go under crossing 2 whose sign is $-$ then you will record it as $U1+bU2-$. An example of a twisted oriented knot diagram $K$ and its Gauss code is shown in Figure \ref{GC}.

\begin{figure}[!htbp]
  \centering
    \subfigure[Twisted knot diagram $K$]{
  \includegraphics[width=0.30\textwidth]{Gauss.png}}  \
   \subfigure[Gauss diagram of $K$]{
  \includegraphics[width=0.28\textwidth]{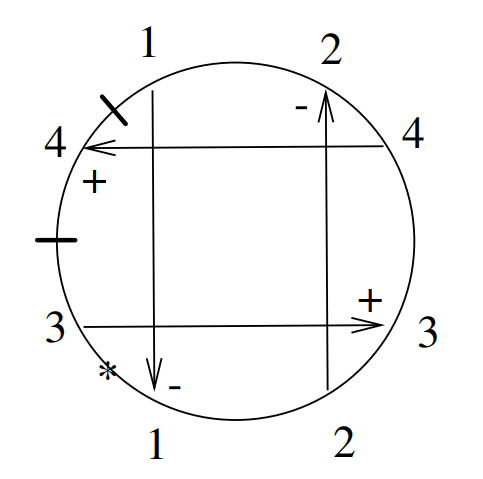}}  \
  \caption{Example}\label{GD}
\end{figure}

If a Gauss code for an oriented knot diagram is given, we construct its Gauss diagram by placing an oriented circle with a basepoint chosen on the circle and then recording the labeling for the crossings and bars on the circle according to the order of the Gauss code whenever walking along the circle counterclockwise. To record the information of the crossing type, now we add chords between two same labelings on the circle and orient each chord from overcrossing site to undercrossing site. Mark each chord with $+$ or $-$ according to the sign of the corresponding crossing in the Gauss code \cite{Y.A.W}. We use $\varepsilon$ to denote ``$\pm$". The resulting graph is the Gauss diagram corresponding to Gauss code for an oriented knot diagram. An example of a twisted oriented knot diagram $K$ and its Gauss diagram is shown in Figure \ref{GD}.

It should be pointed out that not every Gauss diagram without bars corresponds to a classical knot, but these Gauss diagrams can correspond to virtual knots \cite{L.H.K, S.N, M.M.O}.

We present Gauss diagrams corresponding to generalized Reidemeister moves (see Figure \ref{GR}). Note that there are several cases of move R3 depending on the orientations of the strands; we only depict one.
By definition, Gauss diagram remains unchanged under virtual Reidemeister moves.

\begin{figure}[!htbp]
  \centering
   \subfigure[$R1$]{
  \includegraphics[width=0.23\textwidth]{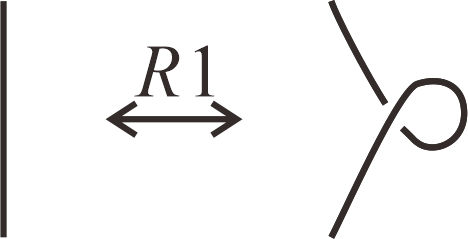}} ~~~~~
      \subfigure[Gauss diagram of $R1$]{
  \includegraphics[width=0.40\textwidth]{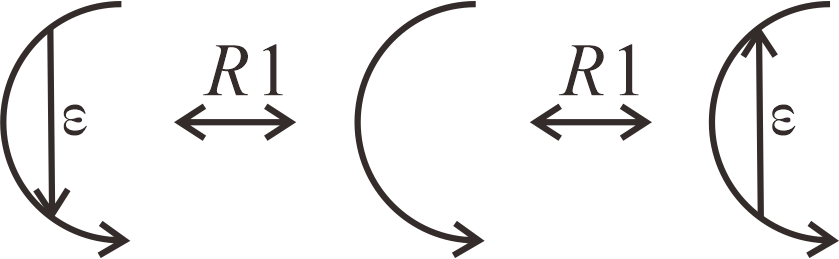}}  \\
 \subfigure[$R2$]{
  \includegraphics[width=0.27\textwidth]{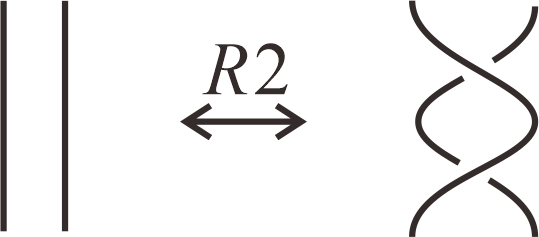}} ~~~~~
        \subfigure[Gauss diagram of $R2$]{
  \includegraphics[width=0.45\textwidth]{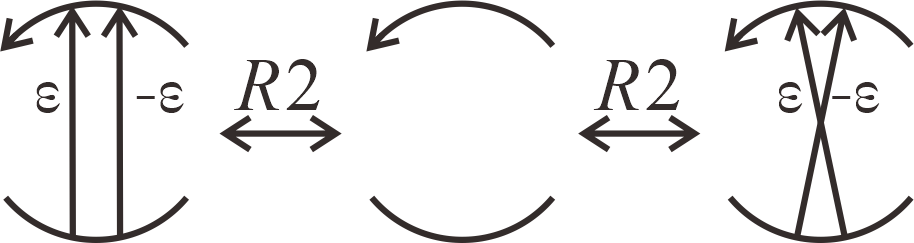}}  \\
   \subfigure[$R3$]{
  \includegraphics[width=0.27\textwidth]{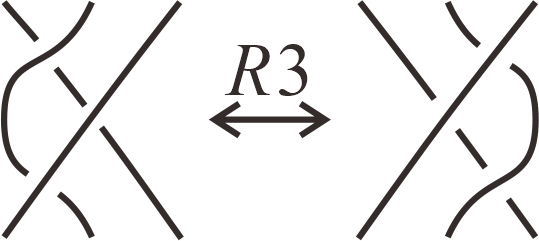}}  ~~~~~
      \subfigure[Gauss diagram of $R3$]{
  \includegraphics[width=0.40\textwidth]{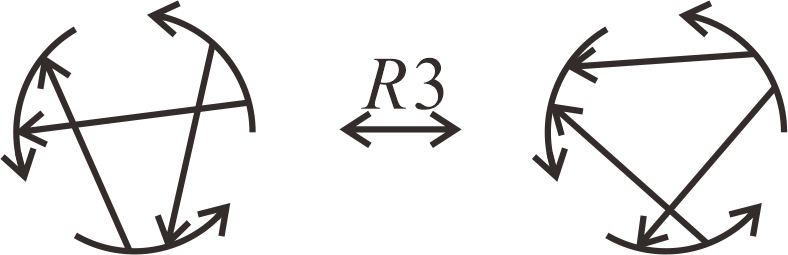}}  \\
  \caption{Gauss diagram of classical Reidemeister moves}\label{GR}
\end{figure}

Goussarov, Polyak and Viro observed that there are two forbidden moves $F1$ and $F2$ (see Figure \ref{FM1}) on virtual knot diagrams. Any virtual knot diagram can be deformed into a trivial knot diagram when these two moves are allowed \cite{T.K, S.N} (see Theorem \ref{virtual}).
It is straightforward to check that the signs on the chords of Gauss diagrams of $F1$ and $F2$ are arbitrary.

\begin{figure}[!htbp]
  \centering
    \subfigure[$F1$]{
  \includegraphics[width=0.28\textwidth]{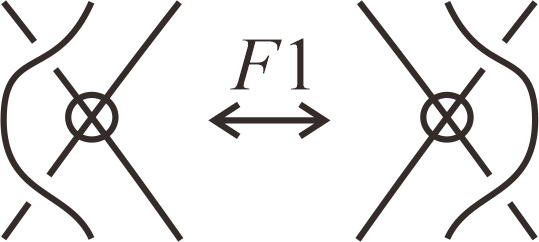}} ~~~~~
  \subfigure[Gauss diagram of $F1$]{
  \includegraphics[width=0.41\textwidth]{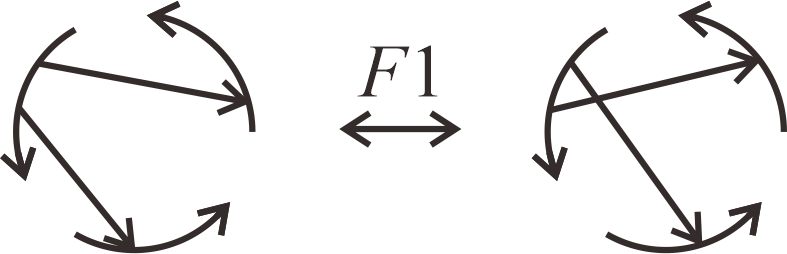}} \\
    \subfigure[$F2$]{
  \includegraphics[width=0.28\textwidth]{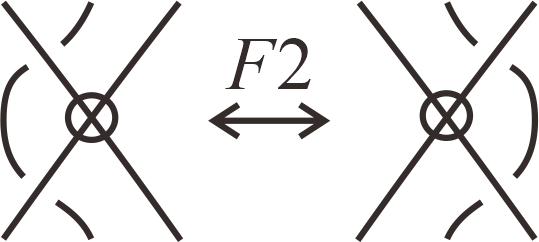}} ~~~~~
      \subfigure[Gauss diagram of $F2$]{
  \includegraphics[width=0.41\textwidth]{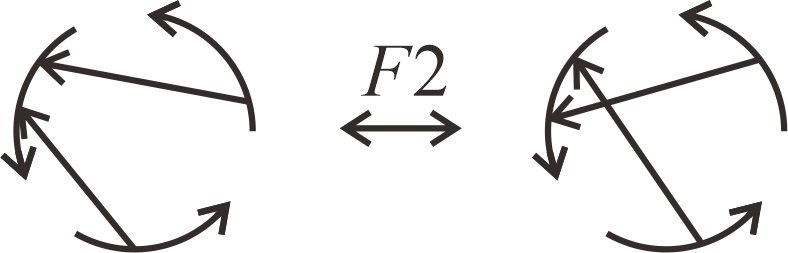}} \\
  \caption{Forbidden moves $F1$ and $F2$}\label{FM1}
\end{figure}

\begin{figure}[!htbp]
  \centering
  \includegraphics[width=0.9\textwidth]{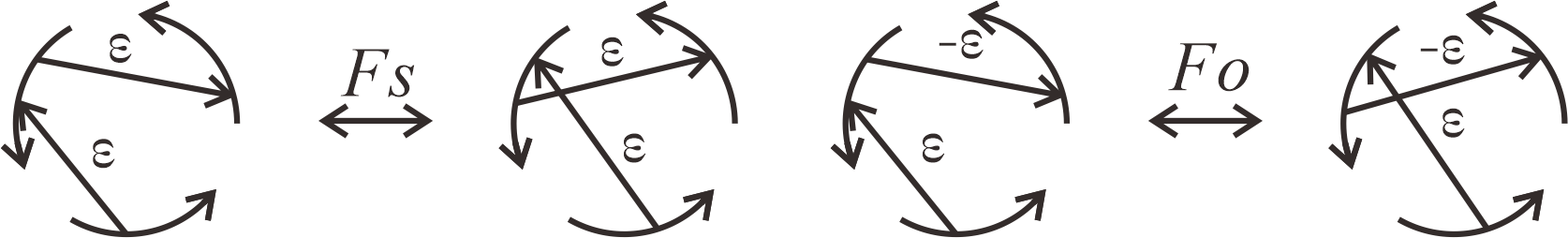}
  \caption{$Fs$ and $Fo$}\label{Fso}
\end{figure}

Nelson proved Theorem \ref{virtual} by the following two moves (see Figure \ref{Fso}): an arrowhead past an arrowtail of the same sign (move $Fs$) or past an arrowtail of the opposite sign (move $Fo$) using (classical) Reidemeister moves and forbidden moves $F1$ and $F2$. Note that moves $Fs$ and $Fo$ also are forbidden in virtual knot theory, so we call moves $Fs$ and $Fo$ forbidden moves in the following statement.

\begin{thm}(\cite{S.N}, Theorem 1)\label{virtual}
Any Gauss diagram of virtual knot can be changed into Gauss diagram of any other virtual knot by a sequence of moves of types $R1$, $R2$, $R3$, $F1$ and $F2$.
\end{thm}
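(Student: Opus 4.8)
The plan is to show that every Gauss diagram of a virtual knot can be reduced to the empty diagram (the trivial knot) using only $R1$, $R2$, $R3$, $F1$, $F2$; since every one of these is an invertible local move, trivializing each diagram immediately yields that any two Gauss diagrams are related, which is the assertion of the theorem. I would argue by induction on the number $n$ of chords, the base case $n=0$ being the empty diagram.

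The engine of the reduction is the pair of derived moves $Fs$ and $Fo$ of Figure \ref{Fso}, which let an arrowhead pass an arrowtail of the same, respectively opposite, sign. Having both available means that a head and a tail belonging to two \emph{distinct} chords that are adjacent on the circle can always be interchanged, regardless of their signs; equivalently, a tail may be pushed past a head. I would take the realizability of $Fs$ and $Fo$ from $R1,R2,R3,F1,F2$ as established, noting that the finite check over the possible orientations and the two signs is exactly where $F1$ and $F2$ enter and is the most tedious bookkeeping in the argument.

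Next I would use these head--tail interchanges to put the diagram into a normal form. Reading counterclockwise, each of the $2n$ endpoints is a head or a tail; since any head can be moved past any tail, I can gather all the tails onto one arc and all the heads onto the complementary arc, producing a cyclic word of block form $t\cdots t\,h\cdots h$. In this form, consider the chord whose tail sits at the tail--head seam: the arc running from that tail into the head block, up to its own head, contains heads only. Hence I can slide this tail rightward past each intervening head, again by $Fs$ or $Fo$, until it becomes adjacent to its own head, so that the chord is an isolated kink and is deleted by $R1$. This removes one chord, and re-sorting the remaining diagram into block form lets the induction continue.

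The step I expect to be the real obstacle is not the combing itself but the bookkeeping that legitimizes it: at every interchange one must verify that the two endpoints genuinely belong to distinct chords and form a head--tail pair, never a head--head or tail--tail pair, so that $Fs$ or $Fo$ actually applies, and that signs are tracked so that the correct one of the two moves is invoked. The clean way to discharge this is precisely the observation above, that block form always exposes a chord with a monochromatic (all-head) arc on one side, which guarantees that each interchange used in the peeling step is of head--tail type. Checking that such a monochromatic arc reappears after each re-sorting, together with the base case, completes the induction.
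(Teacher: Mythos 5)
Your proposal follows essentially the same route as the paper (which imports this result from Nelson and sketches exactly this method): derive the head-past-tail moves $Fs$ and $Fo$ from $R1$--$R3$, $F1$, $F2$, use them to rearrange endpoints freely, and cancel chords one at a time with $R1$, reducing every diagram to the empty one. Your block-form normal form and induction merely make precise the paper's ``rearrange the arrows at will and remove them'' step, and your deferral of the case-check realizing $Fs$ and $Fo$ matches the paper, which likewise takes Figure \ref{Fso} as given from \cite{S.N}.
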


\section{The Main Theorem}

One can also consider Gauss diagram in twisted knot theory. We present Gauss diagrams corresponding to twisted Reidemeister moves (see Figure \ref{TGD}). By definition, Gauss diagram remains unchanged under twisted Reidemeister move $T1$.

\begin{figure}[!htbp]
  \centering
      \subfigure[$T2$]{
  \includegraphics[width=0.22\textwidth]{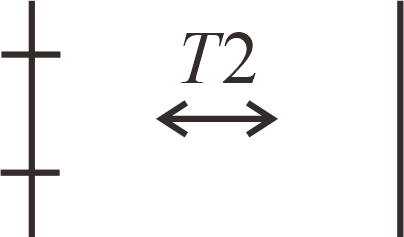}}  ~~~~~
   \subfigure[Gauss diagram of $T2$]{
  \includegraphics[width=0.34\textwidth]{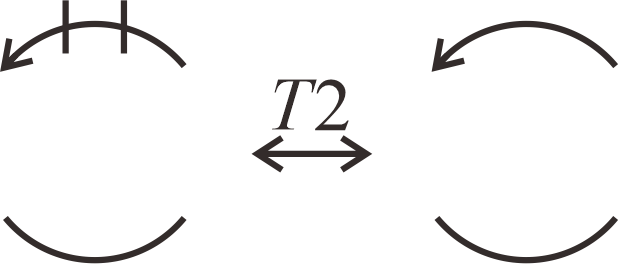}}  \\
  \subfigure[$T3$]{
  \includegraphics[width=0.36\textwidth]{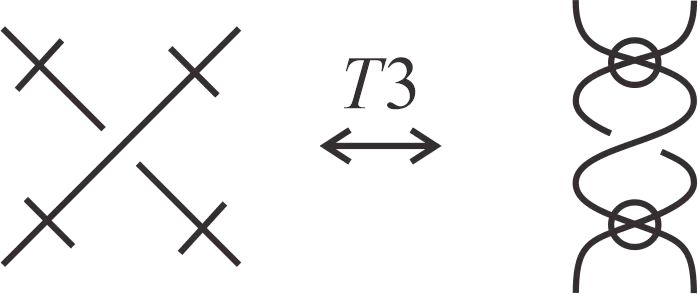}}  ~~~~~
        \subfigure[Gauss diagram of $T3$]{
  \includegraphics[width=0.35\textwidth]{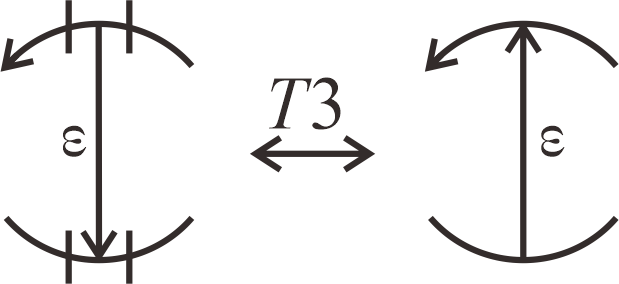}}  \\
  \caption{Gauss diagram corresponding to twisted Reidemeidter moves}\label{TGD}
\end{figure}

In consideration of the case with bars, we naturally think of forbidden moves $F3$ and $F4$ (see Figure \ref{FM2}).

\begin{figure}[!htbp]
  \centering
    \subfigure[$F3$]{
  \includegraphics[width=0.30\textwidth]{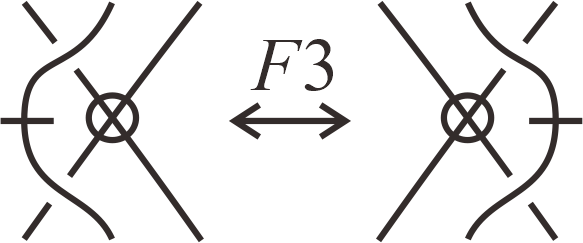}} ~~~~~
  \subfigure[Gauss diagram of $F3$]{
  \includegraphics[width=0.40\textwidth]{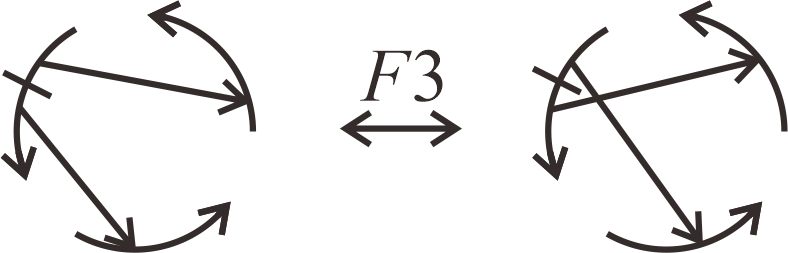}} \\
\subfigure[$F4$]{
  \includegraphics[width=0.30\textwidth]{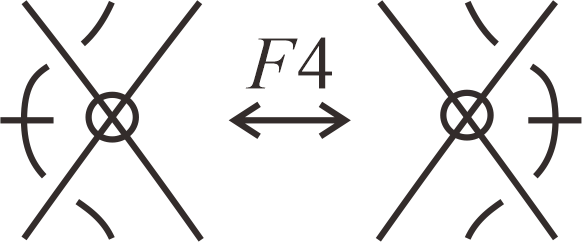}} ~~~~~
      \subfigure[Gauss diagram of $F4$]{
  \includegraphics[width=0.40\textwidth]{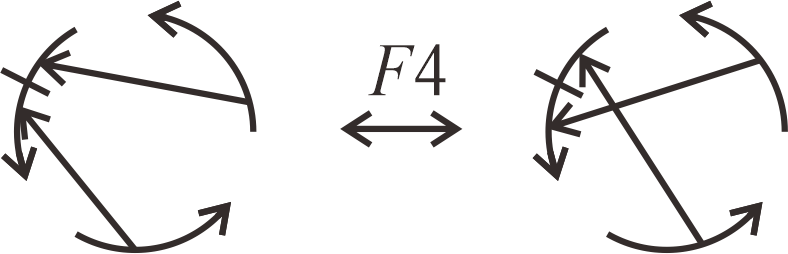}} \\
  \caption{Forbidden moves $F3$ and $F4$}\label{FM2}
\end{figure}

It is easily checked that forbidden moves $F1$ and $F2$ are equivalent by using Gauss diagram corresponding to moves $T2$ and $T3$. Figure \ref{P1}(a) illustrates that a forbidden move $F1$ is realized by moves $F2$, $T2$ and $T3$. Similarly, a forbidden move $F2$ is realized by moves $F1$, $T2$ and $T3$.

In virtual knot theory, if one of the two forbidden moves of Figure \ref{FM1} is allowed, move $F1$, which contains an over arc and one virtual crossing, we arrive at welded knot theory of Fenn, Rim\'{a}nyi and Rourke \cite{R.F}. But in twisted knot theory, forbidden move $F1$ and $F2$ are equivalent. We can't extend welded knot theory to twisted case.

It is easily checked that forbidden move $F3$ and $F4$ are equivalent (see Figure \ref{P1}(b)).

\begin{figure}[!htbp]
  \centering
    \subfigure[$F2 \rightarrow F1$]{
  \includegraphics[width=0.8\textwidth]{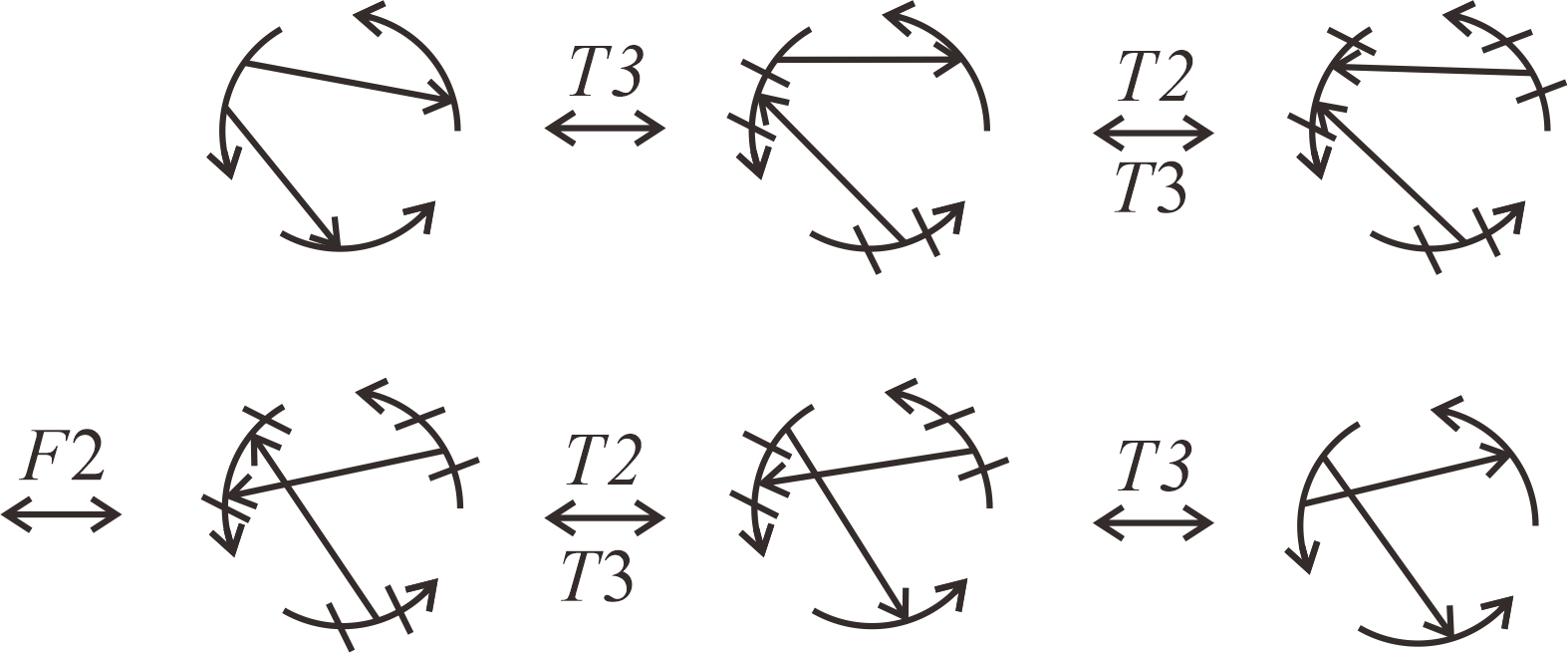}} \\
   \subfigure[$F4 \rightarrow F3$]{
  \includegraphics[width=0.8\textwidth]{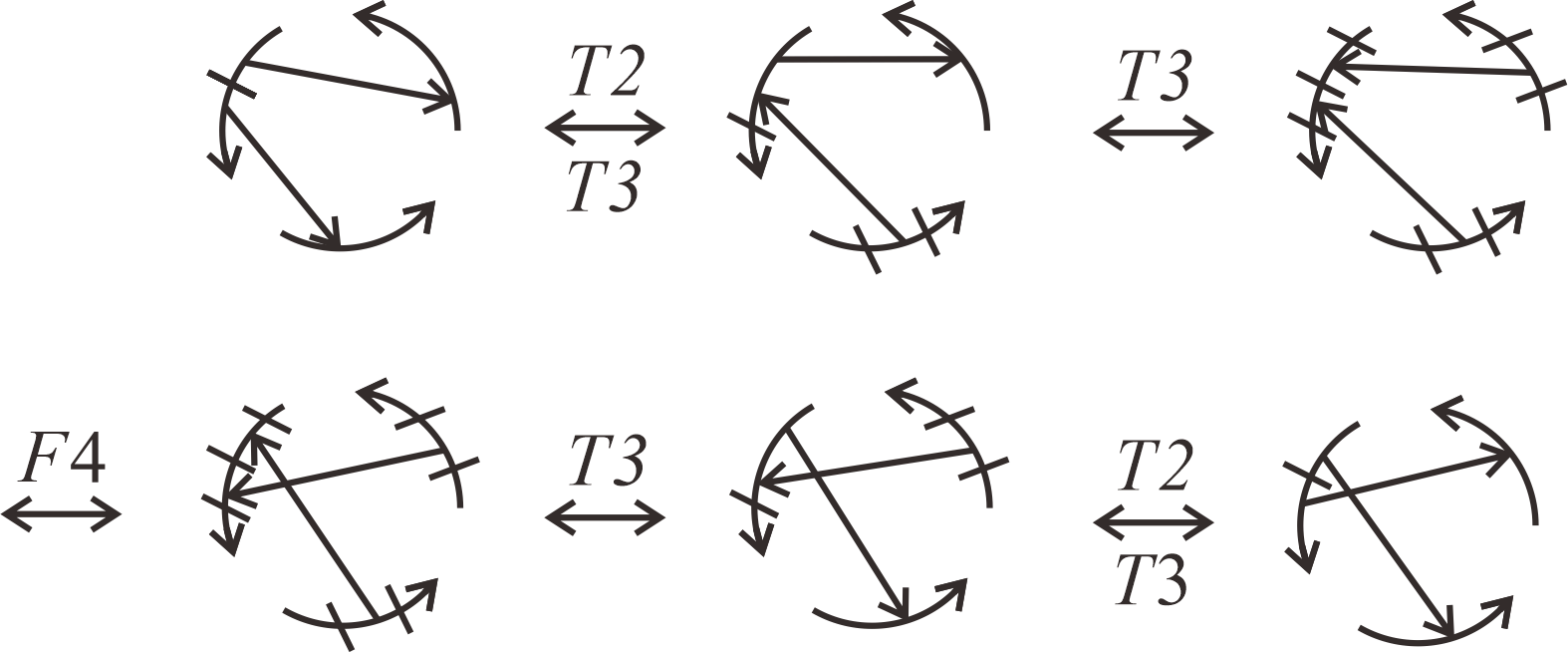}}  \\
  \caption{Preparation for Theorem \ref{twisted}}\label{P1}
\end{figure}

\begin{figure}[!htbp]
  \centering
    \subfigure[$T4$]{
  \includegraphics[width=0.27\textwidth]{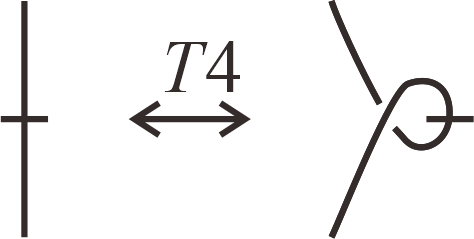}}~~~~~
  \subfigure[Gauss diagram of $T4$]{
  \includegraphics[width=0.46\textwidth]{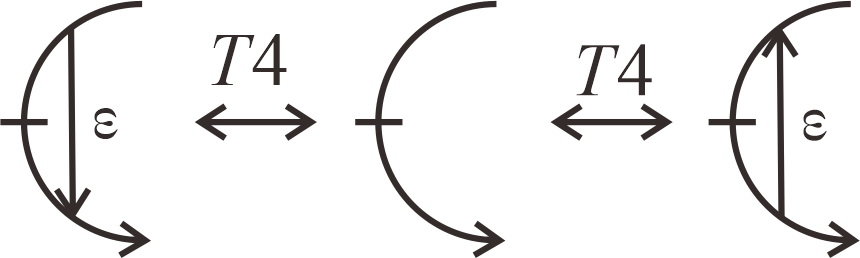}}\\
  \caption{Forbidden moves $T4$}\label{P2}
\end{figure}

To obtain the main result of our paper, we introduce forbidden move $T4$ (Figure \ref{P2}(a)) that corresponds to $R1$ with a bar, which means that we can delete or add a curl with bar.

Forbidden moves $Fs$ and $Fo$ are moves that involve an arrowhead with either sign past an adjacent arrowtail with either sign. In twisted knot theory, we naturally consider whether an arrowhead with either sign can go over a bar and then an adjacent arrowtail with either sign, that is, do the following moves $Fu$ and $Fv$ (Figure \ref{Fuv}) hold, and under what conditions?

\begin{figure}[!htbp]
  \centering
  \includegraphics[width=0.8\textwidth]{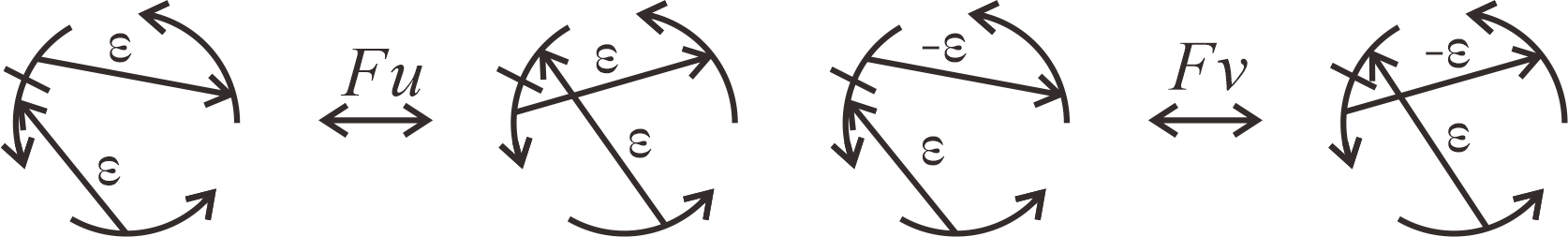}
  \caption{$Fu$ and $Fv$}\label{Fuv}
\end{figure}

\begin{lem}\label{L}
If forbidden moves $T4$, $F1$ (or $F2$) and $F3$ (or $F4$) are allowed in twisted knot theory, then moves $Fu$ and $Fv$ hold.
\end{lem}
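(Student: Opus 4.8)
The plan is to realize $Fu$ and $Fv$ as compositions of two simpler slides: first push the arrowhead across the bar, and then push it across the adjacent arrowtail. The second slide is precisely a move of type $Fs$ or $Fo$, which by Nelson's argument (the derivation used in the proof of Theorem \ref{virtual}) is already a consequence of $R1$, $R2$, $R3$ and $F1$; since $F1$ and $F2$ are interchangeable via $T2$ and $T3$, and likewise $F3$ and $F4$ are equivalent, the hypothesis ``$F1$ (or $F2$)'' and ``$F3$ (or $F4$)'' is no loss. So the whole problem reduces to the genuinely new ingredient: showing that an arrowhead of either sign can be slid past a single bar.

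First I would record the elementary slides in Gauss-diagram language. Reading off Figure \ref{Fuv}, each of $Fu$ and $Fv$ carries the cyclic word $[\text{head}][\text{bar}][\text{tail}]$ to $[\text{bar}][\text{tail}][\text{head}]$, the two moves differing only in whether the head and tail carry the same sign ($Fu$) or opposite signs ($Fv$). Factoring this through the intermediate word $[\text{bar}][\text{head}][\text{tail}]$ splits each of $Fu$, $Fv$ into (i) a head-past-bar slide $[\text{head}][\text{bar}]\to[\text{bar}][\text{head}]$, followed by (ii) a head-past-tail slide $[\text{head}][\text{tail}]\to[\text{tail}][\text{head}]$. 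Step (ii) is $Fs$ in the same-sign case and $Fo$ in the opposite-sign case, both already available.

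For step (i) I would imitate Nelson's trick, with $T4$ playing the role that $R1$ plays in the derivation of $Fs$, $Fo$, and with $F3$ (equivalently $F4$) playing the role of $F1$, $F2$. Concretely, I would use $T4$ to introduce an auxiliary curl carrying a bar adjacent to the arrowhead, apply $F3$ (or $F4$) together with the extended Reidemeister moves to transport the bar across the head, and then remove the auxiliary curl by $T4$ again. Because the sign on the chord of $T4$ is arbitrary (just as the signs on the chords of $F1$, $F2$ are arbitrary), both signs of the moving arrowhead are covered in one stroke. Composing (i) with (ii) and checking that the intermediate bar and any auxiliary arrows cancel then yields $Fu$ and $Fv$, the same-sign bookkeeping producing $Fu$ and the opposite-sign bookkeeping producing $Fv$.

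The hard part will be step (i): verifying that the head-past-bar slide really does follow from $T4$ and $F3/F4$ without leaving residual chords or bars behind. This is where the precise form of $T4$ as an $R1$-type curl \emph{carrying a bar} is essential, since it is what lets one both create and later annihilate the auxiliary configuration, and it is where the sign bookkeeping must be checked case by case. Once this single slide is secured, the head-past-tail half is routine, $Fs$ and $Fo$ being in hand, and the lemma follows.
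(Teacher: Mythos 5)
Your high-level strategy is recognizably the one the paper uses --- reduce the ``past the tail'' half to Nelson's $Fs$/$Fo$, and handle the bar using $T4$ together with $F3$/$F4$ --- but the proposal does not actually prove the lemma, because the one step that carries all of the new content is exactly the step you defer. Everything before it (that $Fs$/$Fo$ are available, that $F1\leftrightarrow F2$ and $F3\leftrightarrow F4$ make the hypotheses symmetric) is either quoted from Theorem \ref{virtual} or already established in the paper. Your step (i), the isolated head-past-bar slide, is announced with ``introduce an auxiliary curl by $T4$, transport the bar across the head by $F3$, remove the curl by $T4$,'' and then you write that ``the hard part will be step (i)\dots the sign bookkeeping must be checked case by case.'' That is a plan, not a derivation; the lemma is precisely the assertion that such a sequence of moves exists, so leaving it unexhibited leaves the lemma unproved. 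The paper's proof, by contrast, consists of the explicit move sequences in Figures \ref{Fu} and \ref{Fv}, and notably those sequences also use $T2$ and $T3$ (creating and cancelling pairs of bars and pushing them through crossings), which your sketch never invokes --- a further sign that the derivation does not factor as simply as you hope.

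There is also a structural worry about the factorization itself. You route $Fu$ and $Fv$ through the intermediate configuration $[\text{bar}][\text{head}][\text{tail}]$, which requires an \emph{isolated} slide of an arrowhead past a single bar with no other chord endpoint involved. That slide is strictly stronger than $Fu$/$Fv$: applied to a lone curl $[\text{head}][\text{bar}][\text{tail}]$ it converts a $T4$-removable configuration into an $R1$-removable one plus a stray bar, and hence would identify the trivial knot with the trivial knot with a bar --- collapsing the dichotomy that Theorem \ref{twisted} (and the abstract) is careful to state. Every bar-crossing move the paper permits ($F3$, $F4$, and the target moves $Fu$, $Fv$ of Figure \ref{Fuv}) moves an endpoint past a bar only in the company of a second chord endpoint, never past a bar alone. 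So before your decomposition can even get started you must either exhibit a derivation of the isolated slide (and then explain why the two terminal states of the main theorem do not coincide), or replace the intermediate configuration by one in which the bar is escorted, as in the paper's figures. As it stands, the reduction may be to a statement that is false in this calculus, and in any case it is unsubstantiated.
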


\begin{proof}

Figure \ref{Fu} illustrates that move $Fu$ is realized by $R1$, $T2$, $T3$, $T4$, $F1$, $F3$, $F4$ and $Fs$.

Figure \ref{Fv} illustrates that move $Fv$ is realized by $R1$, $T2$, $T3$, $T4$, $F1$, $F3$, $F4$, $Fs$ and $Fo$.

\begin{figure}[!htbp]
  \centering
  \includegraphics[width=0.8\textwidth]{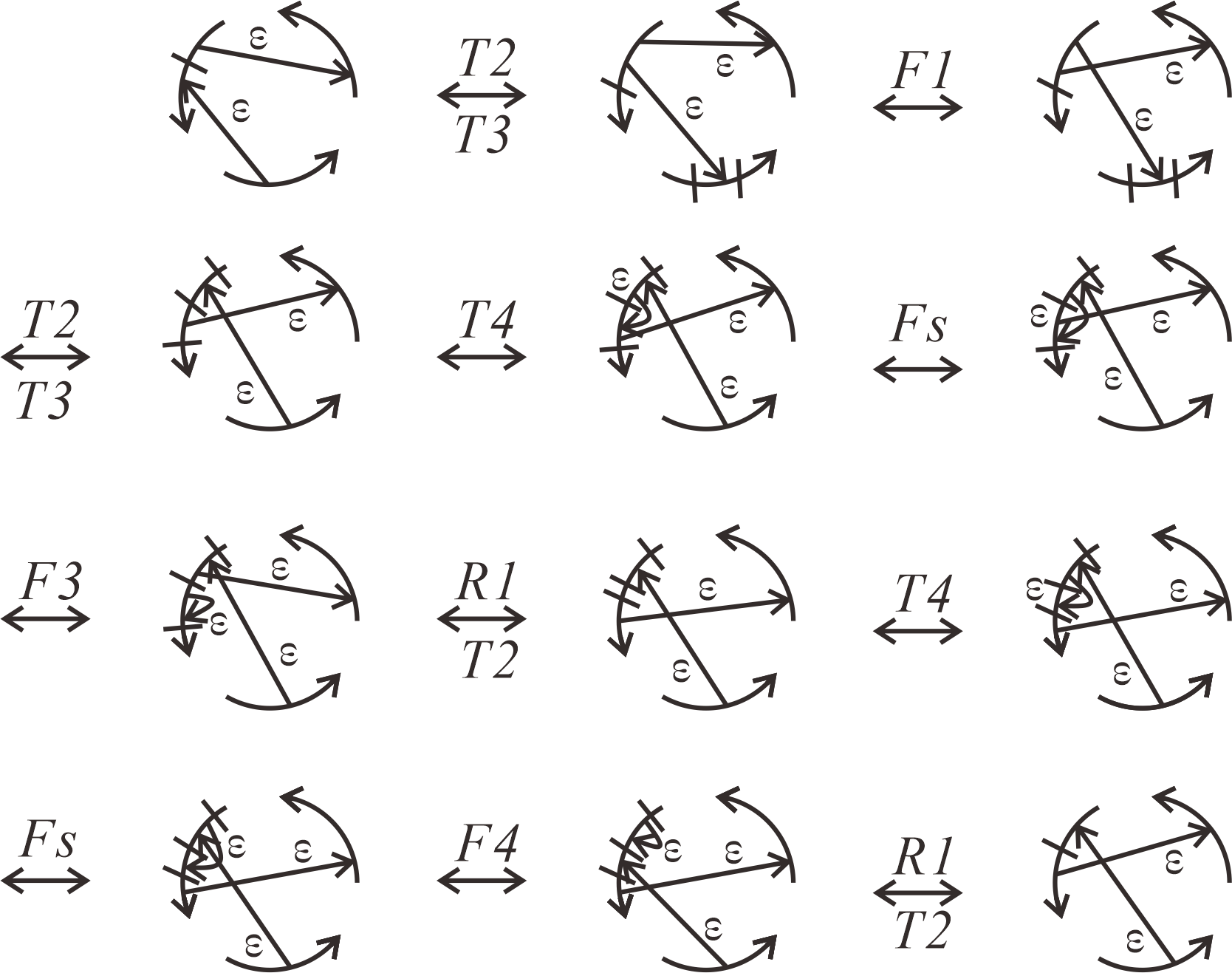}\\
  \caption{Proof of $Fu$}\label{Fu}
\end{figure}

\begin{figure}[!htbp]
  \centering
  \includegraphics[width=0.8\textwidth]{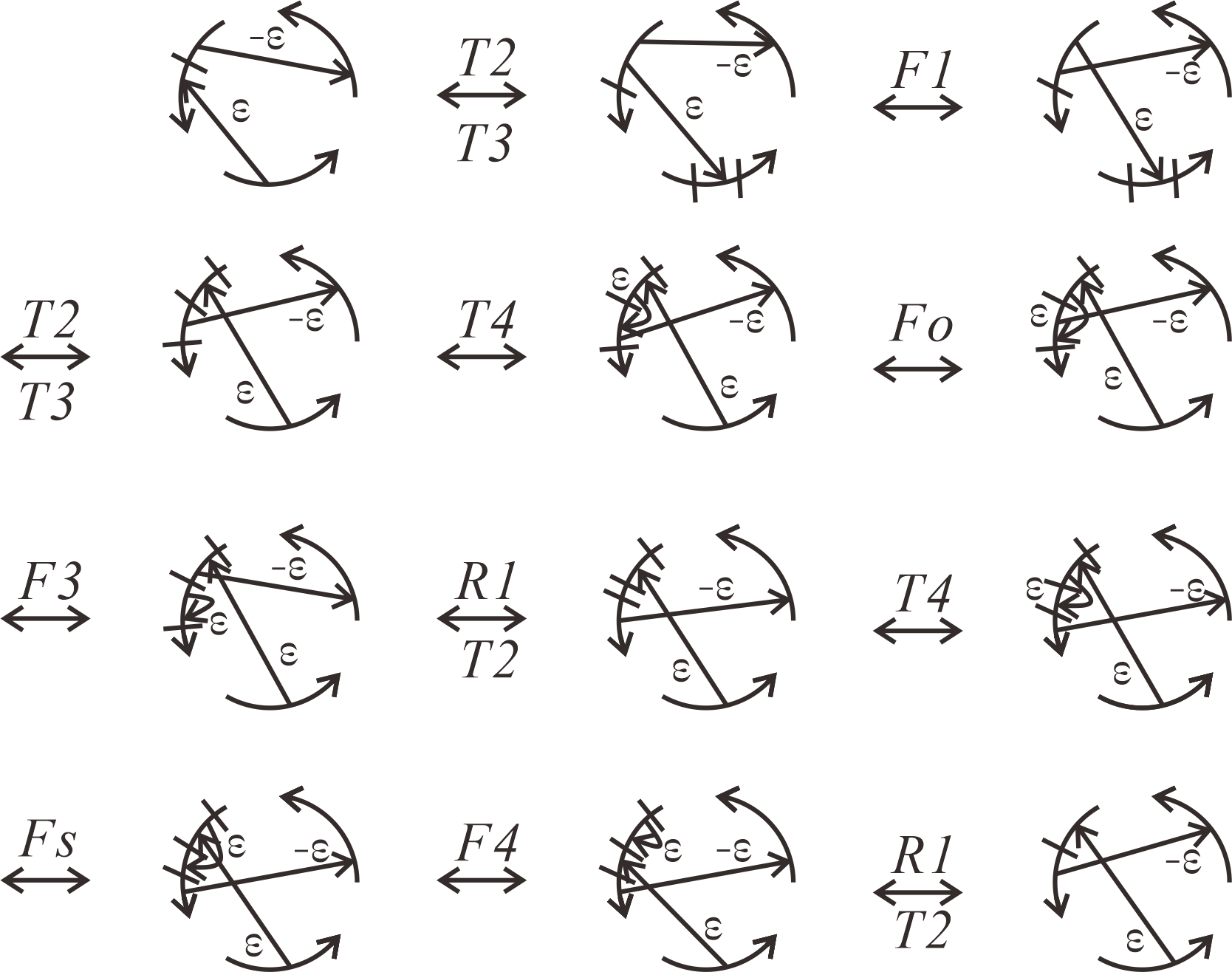}\\
  \caption{Proof of $Fv$}\label{Fv}
\end{figure}

\end{proof}

\begin{thm}\label{twisted}
Any Gauss diagram of twisted knot can be changed into Gauss diagram of a trivial knot (with a bar) by a sequence of moves of types $R1$, $R2$, $R3$, $T2$, $T3$ and forbidden moves $T4$, $F1$ (or $F2$) and $F3$ (or $F4$).
\end{thm}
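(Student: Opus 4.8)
The plan is to lift Nelson's reduction for virtual knots (Theorem~\ref{virtual}) to the twisted setting, handling the chords and the bars of a twisted Gauss diagram in two successive stages. Recall the mechanism behind Theorem~\ref{virtual}: the forbidden moves $F1$ and $F2$, used together with $R1,R2,R3$, realize the slides $Fs$ and $Fo$, so that an arrowhead may be pushed past any \emph{adjacent} arrowtail regardless of the two signs; iterating these slides lets one bring the two feet of a chosen chord together and delete the resulting isolated chord by $R1$, and induction on the number of chords collapses any bar-free diagram to the empty circle. A twisted Gauss diagram differs only in that its circle also carries bars interleaved among the chord feet, so two points must be settled: that bars cannot obstruct the chord reduction, and that the bars which survive can themselves be cleared.

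First I would eliminate all the chords. The only way a bar could interfere is by sitting between a chord foot and the neighbour past which it is being slid. This is exactly what Lemma~\ref{L} removes: the moves $Fu$ and $Fv$ carry an arrowhead across a bar and the arrowtail beyond it, while the more basic moves $F3$ (or $F4$) let a single foot cross a bar on its own. Hence every elementary slide of Nelson's procedure remains available in the presence of bars --- one either pushes the intervening bar aside with $F3/F4$, or carries the head over it with $Fu/Fv$ --- so a bar is transparent to a travelling foot. Running the virtual reduction verbatim therefore deletes the classical chords one at a time, and after finitely many steps we reach a Gauss diagram with no chords at all, namely an oriented circle decorated only by bars.

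It then remains to treat this chordless circle. Here the twisted moves $T2$ (and $T3$) allow a pair of adjacent bars to be cancelled, and $T4$ together with $R1$ may be used to create or absorb an isolated curl-with-bar when a local rearrangement is convenient. Sliding the bars around the circle and cancelling them two at a time brings their number down as far as the moves permit, namely to $0$ or to $1$. The diagram is thus carried either to a trivial knot or to a trivial knot with a single bar, which is the assertion of Theorem~\ref{twisted}.

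The main obstacle is the first stage: one must verify that the bar-crossing moves of Lemma~\ref{L} and $F3/F4$ really do render every bar transparent, so that Nelson's combinatorial reduction proceeds exactly as in the bar-free case and no configuration ever arises in which a chord fails to be isolable because a bar is trapped among its feet. Once this transparency is granted the reduction is governed entirely by the cyclic pattern of heads and tails, precisely as for virtual knots, and the dichotomy in the conclusion reflects only whether an odd or an even number of bars is finally left on the circle.
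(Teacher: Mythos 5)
Your proposal is correct and follows essentially the same route as the paper: both arguments use the forbidden moves $F1$--$F4$, $Fs$, $Fo$ and the moves $Fu$, $Fv$ of Lemma~\ref{L} to make bars transparent so that Nelson's rearrangement of arrows goes through verbatim, with isolated chords (possibly trapping one bar) deleted by $R1$ and $T4$. Your explicit second stage --- cancelling the leftover bars in adjacent pairs by $T2$ to reach $0$ or $1$ bars --- is left implicit in the paper's proof, and the bookkeeping issue you flag at the end (whether a bar can obstruct isolating a chord) is likewise not verified in any more detail by the paper itself.
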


\begin{proof}

We observe that forbidden move $F1$ ($F3$) allows us to move an arrowhead with either sign past (with bar) an adjacent arrowhead with either sign without condition on the tails of these respective arrows, and move $F2$ ($F4$) let us do the same with arrowtails (with bar).

It should be pointed out that forbidden moves $Fs$ and $Fo$ ($Fu$ and $Fv$) allow us to move an arrowhead of either sign past (with bar) an arrowtail of either sign in the same manner, then we can simply rearrange the arrows in a given diagram at will.

Now, it suffices to show that Gauss diagram of any twisted knot may be converted to Gauss diagram of a trivial knot (with a bar), we simply use forbidden moves $F1$, $F2$, $F3$, $F4$, $Fo$, $Fs$, $Fu$ and $Fv$ to rearrange the arrows. If we have extra arrows of either sign, we can use forbidden moves $F1$, $F2$, $F3$, $F4$, $Fo$, $Fs$, $Fu$ and $Fv$ to move unwanted arrows into specific position to be removed by type $R1$ and $T4$ moves. Then any twisted knot can be unknotted by this technique.
\end{proof}

\section{Acknowledgements}
This research is supported by the National Natural Science Foundation of China (No. 12001464),
the General Research Project of Hunan Provincial Department of Education (NO. 20C1766),
the Natural Science Foundation of Hunan Province (No. 2022JJ40418),
the Doctor's Funds of Xiangtan University (No. 09KZ$|$KZ08069),
and the Hu Xiang Gao Ceng Ci Ren Cai Ju Jiao Gong Cheng-Chuang Xin Ren Cai (No. 2019RS1057).

\end{document}